\newtheorem{thm}{Theorem}[section]
\newcommand{\R}{{\rm I}\kern-0.18em{\rm R}}
\newcommand{\1}{{\rm 1}\kern-0.25em{\rm I}}
\newcommand{\E}{{\rm I}\kern-0.18em{\rm E}}
\newcommand{\p}{{\rm I}\kern-0.18em{\rm P}}
\def\@fnsymbol#1{\ensuremath{\ifcase#1\or a\or b\or c\or d\or \e\or f\or *\dagger 	\or \ddagger\ddagger \else\@ctrerr\fi}}
\title{Inequalities for m-Divisible Distributions and Testing of Infinite Divisibility}
\author{Lev B. Klebanov\footnote{Department of Probability and Mathematical Statistics, Charles University, Prague, Czech Republic},\; Ashot V. Kakosyan\footnote{Yerevan State University, Yerevan, Armenia}\; and Irina V. Volchenkova\footnote{Czech Technical University in Prague, Czech Republic} }
\date{}
\begin{document}
\maketitle

\begin{abstract}
We state some inequalities for m-divisible and infinite divisible characteristic functions. Basing on them we propose statistical test for a distribution to be infinite divisible.

\noindent
{\bf Key words}: infinite divisible distributions; statistical tests.
\end{abstract}

\section{Inequalities for characteristic functions. Estimates from below}\label{sec1} 
\setcounter{equation}{0} 
Let $f(t)$ be characteristic function of a distribution on real line. We say that $f(t)$ is $m$-divisible ($m$ is positive integer) if $f^{1/m}(t)$ is a characteristic function as well. The function $f(t)$ is infinitely divisible if it is $m$-divisible for all positive integers $m$. Properties of infinitely divisible characteristic functions were well-studied (see, for example, \cite{L}). In particular, any probability distribution with a compact support is not infinitely divisible. However, it is not true for $m$-divisible distributions. Really, let us take arbitrary distribution with a compact support. Denote its characteristic function by $g(t)$ and define $f(t)=g^m(t)$. Clearly, $f(t)$ is $m$-divisible characteristic function of a distribution with compact support.

Let us start with the case of arbitrary symmetric distribution having compact support. 

\begin{thm}\label{th1}
Let $g(t)$ be a characteristic function of an even non-degenerate distribution function having its support in interval $[-A,A]$, $A>0$. Then the inequality
\begin{equation}\label{eq1}
\cos(\sigma \cdot t) \leq g(t)
\end{equation}
holds for all $t \in (-4.49/A,4.49/A)$. Here $\sigma$ is standard deviation of the distribution with characteristic function $g(t)$.
\end{thm}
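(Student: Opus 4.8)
The plan is to exploit the symmetry to write $g$ as a real expectation and then obtain the bound from a single application of Jensen's inequality after a well-chosen change of variable. Let $X$ denote a random variable with the given distribution. Since the distribution is even we have $\E X = 0$, hence $\sigma^2 = \E X^2$, and
\begin{equation*}
g(t) = \E \cos(tX) = \E\, \phi(X^2), \qquad \phi(y) := \cos\bigl(t\sqrt{y}\bigr),
\end{equation*}
where $\phi$ is viewed as a function on $[0,\infty)$. Because $X$ is supported in $[-A,A]$, the variable $Y = X^2$ is supported in $[0,A^2]$ and $\E Y = \sigma^2$. If $\phi$ is convex on $[0,A^2]$, then Jensen's inequality gives $\E\,\phi(Y) \ge \phi(\E Y) = \phi(\sigma^2) = \cos(\sigma t)$, which is precisely \eqref{eq1}. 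So everything reduces to locating the interval on which $\phi$ is convex.

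The second step is this convexity computation. Differentiating twice and writing $u = t\sqrt{y}$, I expect to obtain
\begin{equation*}
\phi''(y) = \frac{t\,\bigl(\sin u - u\cos u\bigr)}{4\, y^{3/2}},
\end{equation*}
so that for $t>0$ and $y>0$ the sign of $\phi''(y)$ equals the sign of $\psi(u) := \sin u - u\cos u$. A short analysis of $\psi$ settles the matter: $\psi(0)=0$ and $\psi'(u) = u\sin u$, which is positive on $(0,\pi)$, so $\psi$ rises to $\psi(\pi)=\pi>0$ and remains positive until its first zero. That zero occurs exactly at the smallest positive root of $\tan u = u$, namely $u^\ast \approx 4.4934$. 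Hence $\psi(u)\ge 0$, and $\phi$ is convex, for all $u \in [0,u^\ast]$; the case $t<0$ follows because both $g$ and $\cos(\sigma\,\cdot)$ are even in $t$.

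Finally I combine the pieces. When $|t| < 4.49/A$, every $y\in[0,A^2]$ satisfies $|t|\sqrt{y} \le |t|\,A < 4.49 < u^\ast$, so $\phi$ is convex on the whole range of $Y$ and Jensen's inequality delivers \eqref{eq1}; the stated constant $4.49$ is merely a safe truncation of $u^\ast \approx 4.4934$. The one genuinely delicate point is pinning down the convexity range: the interval is what it is precisely because $\phi$ loses convexity at $u = u^\ast$, and this is exactly what fixes the admissible set to $(-4.49/A,\,4.49/A)$ and explains the otherwise mysterious constant. Everything else is routine.
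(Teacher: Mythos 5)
Your proof is correct and follows essentially the same route as the paper: the substitution $Y=X^2$ turning $g(t)$ into $\E\cos\bigl(t\sqrt{Y}\bigr)$, convexity of $y\mapsto\cos\bigl(t\sqrt{y}\bigr)$ up to the first positive root of $\sin u - u\cos u=0$ (equivalently $\tan u = u$, $u^\ast\approx 4.4934$), and a single application of Jensen's inequality with $\E Y=\sigma^2$. The only difference is cosmetic: you verify the location of $u^\ast$ analytically via $\psi'(u)=u\sin u$, where the paper simply cites a numerical computation.
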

\begin{proof}
Denote by $G(x)$ a distribution function corresponding to the characteristic function $g(t)$. We have 
\begin{equation}\label{eq2}
 g(t)= \int_{-A}^{A} \cos(tx) dG(x) = \int_{0}^{A}\cos(t x)d\tilde{G}(x)=\int_{0}^{A^2}\cos(t \sqrt{y})dH(y), 
\end{equation}
where $\tilde{G}(x)=2(G(x)-1/2)$ and $H(y)=\tilde{G}(\sqrt{y})$. In view of the fact that $\cos(t)$ is an even function we can consider positive values of $t$ only. 

It is not difficult to see that the function $\cos(t\sqrt{y})$ is convex in $y$ for the case when $0 \leq t \sqrt{y} \leq z_o$, where $z_o$ is the first positive root of the equation $\sin z- z\cos z=0$. Numerical calculations show that $z_o >4.49$. For the case of $|t|\leq 4.49/A$ let us apply Jensen inequality to (\ref{eq2}). We obtain
\[g(t)=\int_{0}^{A^2}\cos(t \sqrt{y})dH(y) \geq \cos\Bigl(t \sqrt{\int_{0}^{A^2}ydH(y)\;}\Bigr)=\cos(\sigma t). \]
\end{proof}

The condition of support compactness may be changed by the restriction of absolute fifth moment existence. Namely, the following result holds.

\begin{thm}\label{th1a}
Let $G(x)$ be a symmetric probability distribution function. Denote by $a_j$ its absolute moments and suppose that $a_{10}$ is finite.
Denote by $g(t)$ corresponding characteristic function and $\sigma^2 =a_2$. Then we have
\begin{equation}\label{eq2a}
\cos(\sigma t) \leq g(t)
\end{equation}
for all $|t| \leq 5(a_4-\sigma^4)/( \sigma^{10}+2\sigma^5 a_5+ a_{10})^{1/2} $.
\end{thm}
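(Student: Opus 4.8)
The plan is to study the difference $\phi(t):=g(t)-\cos(\sigma t)$ and to show it is non-negative on the stated interval by means of a fifth-order Taylor expansion at the origin whose remainder I control through a single Cauchy--Schwarz estimate. Let $X$ be a random variable with distribution $G$; since $a_{10}<\infty$, all absolute moments up to order ten are finite, so $g(t)=\E\cos(tX)$ is five times continuously differentiable and may be differentiated under the expectation sign. Because $G$ is symmetric, $g$ and hence $\phi$ are even. Differentiating repeatedly I obtain $g^{(2)}(t)=-\E[X^2\cos tX]$, $g^{(4)}(t)=\E[X^4\cos tX]$ and $g^{(5)}(t)=-\E[X^5\sin tX]$, while $\cos(\sigma t)$ has derivatives $-\sigma^2\cos\sigma t$, $\sigma^4\cos\sigma t$ and $-\sigma^5\sin\sigma t$ of the corresponding orders. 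Evaluating at $t=0$ gives $\phi(0)=\phi'(0)=\phi''(0)=\phi'''(0)=\phi^{(5)}(0)=0$ and $\phi^{(4)}(0)=a_4-\sigma^4$, this last quantity being non-negative by Jensen's inequality $\E X^4\ge(\E X^2)^2$.

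Next I would apply Taylor's formula with Lagrange remainder at order five. Since the first four Taylor coefficients vanish, for each $t>0$ there is a $\xi\in(0,t)$ with
\[
\phi(t)=\frac{a_4-\sigma^4}{24}\,t^4+\frac{\phi^{(5)}(\xi)}{120}\,t^5,
\]
and by evenness it suffices to treat $t>0$. The crucial step is a uniform bound $|\phi^{(5)}(\xi)|\le(\sigma^{10}+2\sigma^5a_5+a_{10})^{1/2}$ valid for \emph{every} real $\xi$. Writing $\phi^{(5)}(\xi)=\E\bigl[\sigma^5\sin(\sigma\xi)-X^5\sin(\xi X)\bigr]$ and using $|\E Z|\le(\E Z^2)^{1/2}$ together with the pointwise bound $|\sigma^5\sin(\sigma\xi)-X^5\sin(\xi X)|\le\sigma^5+|X|^5$ (a consequence of $|\sin|\le1$), I get
\[
|\phi^{(5)}(\xi)|\le\bigl(\E(\sigma^5+|X|^5)^2\bigr)^{1/2}=\bigl(\sigma^{10}+2\sigma^5a_5+a_{10}\bigr)^{1/2}.
\]
This is exactly the point at which finiteness of $a_{10}$ is needed and where the fifth absolute moment enters.

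Finally I combine the two displays. With $M:=(\sigma^{10}+2\sigma^5a_5+a_{10})^{1/2}$ the remainder estimate yields, for $t>0$,
\[
\phi(t)\ge\frac{t^4}{24}\Bigl[(a_4-\sigma^4)-\frac{M}{5}\,t\Bigr],
\]
which is non-negative precisely when $t\le 5(a_4-\sigma^4)/M$; evenness then extends the conclusion to the full symmetric interval, proving (\ref{eq2a}). The factor $5=120/24$ is simply the ratio of the two factorials from the fourth-order term and the fifth-order remainder, so essentially all the content lies in the Cauchy--Schwarz estimate of $\phi^{(5)}$. I expect the main pitfall to be controlling the remainder by the wrong quantity: expanding to sixth order, or using $|\sin\theta|\le|\theta|$, produces a bound in terms of $a_6$ instead, and the key decision is to stop at the fifth derivative and pay with $|\sin|\le1$ inside a single $L^2$ bound, which is what reproduces the denominator $\|\sigma^5+|X|^5\|_2$ appearing in the statement.
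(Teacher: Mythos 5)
Your proof is correct and takes essentially the same route as the paper's: the same auxiliary function $\varphi(t)=g(t)-\cos(\sigma t)$ with vanishing derivatives through order three at the origin, the same Cauchy--Schwarz bound $|\varphi^{(5)}(t)|\le\bigl(\sigma^{10}+2\sigma^{5}a_{5}+a_{10}\bigr)^{1/2}$, and the same final lower bound $\varphi(t)\ge\frac{t^4}{24}\bigl((a_4-\sigma^4)-Mt/5\bigr)$ yielding the threshold $5(a_4-\sigma^4)/M$. Your explicit Lagrange-remainder formulation is merely a cleaner packaging of the paper's step of integrating the fifth-derivative bound four times.
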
 
\begin{proof} 
Suppose that $g(t)$ is not identical to $\cos(\sigma t)$. Define
\[\varphi(t)=g(t)-\cos(\sigma t).\]
It is easy to verify that $\varphi(t)$ and its derivatives at the point $t=0$ satisfy
\[\varphi^{(k)}(0)=0,\quad \text{for}\quad k=0,1,2,3,  \]
and $\varphi^{(4)}(0)=a_4-a^2_2 >0$. Therefore, $\varphi^{(4)}(t)$ is positive in some neighborhood of the point $t=0$ and, consequently,
$\varphi(t)$ is non-negative at least in some neighborhood obtained by means of forth times integration. Our aim now is to estimate the length of the interval for $\varphi^{(4)}(t)$ positiveness. For this consider derivative of $\varphi^{(4)}(t)$ that is $\varphi^{(5)}(t)$. We have
\[ |\varphi^{(5)}(t)| = |\int_{-\infty}^{\infty}\bigl(\sigma^5 \sin(\sigma t)-  \sin(t x)x^5 \bigr) dG(x)| \leq \] 
\[ \leq \Bigl( \int_{-\infty}^{\infty}\bigl(\sigma^5 \sin(\sigma t)-  \sin(t x)x^5 \bigr)^2 dG(x) \Bigr)^{1/2} \leq \Bigl( \sigma^{10}+2\sigma^5 a_5+ a_{10}\Bigr)^{1/2}.\] 
In view of the facts that $\varphi^{(4)}(0)= a_4-\sigma^4>0$ we see that $\varphi^{(4)}(t) \geq 0$ on the interval $[0,(a_4-\sigma^4)/( \sigma^{10}+2\sigma^5 a_5+ a_{10})^{1/2}]$, and, because of symmetry, on interval $-(a_4-\sigma^4)/( \sigma^{10}+2\sigma^5 a_5+ a_{10})^{1/2},(a_4-\sigma^4)/( \sigma^{10}+2\sigma^5 a_5+ a_{10})^{1/2}$. This guarantees that 
\[\varphi(t) \geq \frac{t^4}{24}\Bigl((a_4-\sigma^4)-( \sigma^{10}+2\sigma^5 a_5+ a_{10})^{1/2}t/5 \Bigr). \] 
From this and the symmetry of $\varphi(t)$ follows the result.
\end{proof}

Let us turn to the of $m$-divisible distribution with compact support.

\begin{thm}\label{th2}
Let $f(t)$ be a characteristic function of $m$-divisible symmetric distribution having compact support in $[-A,A]$, $A>0$ and standard deviation $\sigma$. Then
\begin{equation}\label{eq3}
\cos^m(\sigma t/\sqrt{m}) \leq f(t)
\end{equation}
for $|t|\leq \min(4.49 m/A,\pi \sqrt{m}/(2\sigma))$. 
\end{thm}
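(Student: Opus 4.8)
The plan is to pass to the characteristic function of an $m$-th root and reduce everything to Theorem~\ref{th1}. Since $f(t)$ is $m$-divisible, $g(t):=f^{1/m}(t)$ is the characteristic function of some distribution $Q$; representing the given law as $X=Y_1+\cdots+Y_m$ with $Y_i$ i.i.d.\ $\sim Q$, the finiteness of all moments (compact support) lets me match the first two cumulants: $\E X=m\,\E Y_1=0$ forces $\E Y_1=0$, and $\operatorname{Var}X=m\operatorname{Var}Y_1=\sigma^2$ forces $Q$ to have variance $\sigma^2/m$, i.e.\ standard deviation $\sigma/\sqrt m$. First I would pin down the support of $Q$: the support of a sum of independent variables is the closure of the sumset of the supports, so $\inf\operatorname{supp}X=m\inf\operatorname{supp}Q$ and $\sup\operatorname{supp}X=m\sup\operatorname{supp}Q$. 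Since $\operatorname{supp}X\subseteq[-A,A]$, this gives $\operatorname{supp}Q\subseteq[-A/m,A/m]$.

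The step I expect to be the main obstacle is showing that $Q$ is itself \emph{symmetric}, since Theorem~\ref{th1} requires an even distribution, and realness of $f$ only tells me that $g^m$ is real, which a priori does not force $g$ to be real. Here I would exploit the compactness of $\operatorname{supp}Q$, which makes $g$ extend to an entire function. Writing $g=|g|\,e^{i\theta}$ with $\theta$ continuous and $\theta(0)=0$ near the origin, the relation $g^m=f\in\mathbb{R}$ forces $m\theta(t)\in\pi\mathbb{Z}$; by continuity $\theta\equiv0$ in a neighbourhood of $0$, so $g$ is real on a real interval. An entire function real on a real interval has real Taylor coefficients at $0$ and is therefore real on all of $\mathbb{R}$. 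Hence $g$ is real-valued, $Q$ is symmetric, and it is non-degenerate because $\sigma>0$.

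With $Q$ even, non-degenerate, supported in $[-A/m,A/m]$ and of standard deviation $\sigma/\sqrt m$, I would apply Theorem~\ref{th1} to $g$, using half-width $A/m$ in the role of $A$. Its conclusion reads
\[
\cos\!\Bigl(\tfrac{\sigma}{\sqrt m}\,t\Bigr)\le g(t)\qquad\text{for }|t|\le \frac{4.49}{A/m}=\frac{4.49\,m}{A}.
\]

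Finally I would raise this inequality to the $m$-th power. For $|t|\le \pi\sqrt m/(2\sigma)$ the argument satisfies $\tfrac{\sigma}{\sqrt m}|t|\le \pi/2$, so $\cos(\tfrac{\sigma}{\sqrt m}t)\ge 0$; thus on the intersection $|t|\le\min(4.49m/A,\ \pi\sqrt m/(2\sigma))$ we have $0\le\cos(\tfrac{\sigma}{\sqrt m}t)\le g(t)$. Since $x\mapsto x^m$ is increasing on $[0,\infty)$, raising both sides to the $m$-th power yields $\cos^m(\sigma t/\sqrt m)\le g^m(t)=f(t)$, which is exactly \eqref{eq3}. This also explains the two terms in the minimum: the first comes from the support bound feeding Theorem~\ref{th1}, while the second is precisely the requirement that the cosine stay non-negative so that the passage to the $m$-th power preserves the inequality.
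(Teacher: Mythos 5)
Your proposal is correct and follows essentially the same route as the paper: pass to $g=f^{1/m}$, note that $g$ is symmetric with variance $\sigma^2/m$ and support in $[-A/m,A/m]$, apply Theorem~\ref{th1}, and raise to the $m$-th power on the range where $\cos(\sigma t/\sqrt m)\ge 0$, which is exactly where the second term of the minimum comes from. The only difference is that you supply explicit justifications (the entire-function argument for the realness of $g$, and the sumset argument for the support) for facts the paper simply asserts as clear or cites from Linnik--Ostrovskii.
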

\begin{proof}
Denote $g(t)=f^{1/m}(t)$. It is clear that:
\begin{enumerate}
\item $g(t)$ is a symmetric characteristic function;
\item $m\sigma^2(g)=\sigma^2(f)=\sigma^2$ (because variance of sum of independent random variables equals to the sum of their variances);
\item distribution with characteristic function $g$ has compact support in \\ $[-A/m,A/m]$ (see, for example, \cite{LO}, Theorem 3.2.1).
\end{enumerate}
Applying Theorem \ref{th1} to the function $g(t)$ we find
\[\cos(\sigma t/\sqrt{m}) \leq g(t) = f^{1/m}(t)\]
for $|t| \leq 4.49m/A$. However, for $|t|\leq \min(4.49 m/A,\pi \sqrt{m}/(2\sigma))$ the left hand side of previous inequality is non-negative and we come to the conclusions of Theorem \ref{th2}.
\end{proof}
Note that $\cos(\sigma t/\sqrt{m})$ is monotone increasing in $m$ for $|t|\leq \pi\sqrt{m}/(2\sigma)$ and, therefore the estimator (\ref{eq3}) is more precise than (\ref{eq1}).

Let us give a little bit different result.
\begin{thm}\label{th2a}
Let $f(t)$ be a characteristic function of $m$-divisible symmetric distribution having finite tenth moment $a_{10}$. Then
\begin{equation}\label{eq3}
\cos^m(\sigma t/\sqrt{m}) \leq f(t)
\end{equation}
for $|t|\leq \min(C\sqrt{m},\pi \sqrt{m}/(2\sigma))$, where positive $C$ depends on absolute moments $a_k,\;(k=1,\ldots ,10)$ only. 
\end{thm}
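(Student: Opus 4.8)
The plan is to run the reduction of Theorem \ref{th2} once more, using Theorem \ref{th1a} in place of Theorem \ref{th1}. Set $g(t)=f^{1/m}(t)$, let $X$ be a variable with characteristic function $f$ and $Y$ one with characteristic function $g$, so that $X\stackrel{d}{=}Y_1+\cdots+Y_m$ with the $Y_i$ independent copies of $Y$. Then $g$ is even, and by additivity of variances $\sigma^2(g)=\sigma^2/m$, so the standard deviation of $g$ is $\sigma/\sqrt m$. Since the $m$-fold convolution $f$ has a finite tenth moment, so does each summand — it is classical that a finite absolute moment of an independent sum forces the same finite absolute moment on every term — and hence Theorem \ref{th1a} applies to $g$ and gives
\[\cos(\sigma t/\sqrt m)\le g(t)=f^{1/m}(t)\qquad\text{for }|t|\le L_m,\]
where $L_m=5\bigl(a_4(g)-\sigma(g)^4\bigr)\big/\bigl(\sigma(g)^{10}+2\sigma(g)^5a_5(g)+a_{10}(g)\bigr)^{1/2}$ is the interval length that theorem supplies for the root $g$. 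Exactly as in Theorem \ref{th2}, on $|t|\le\pi\sqrt m/(2\sigma)$ the left-hand side is non-negative, so on $|t|\le\min\bigl(L_m,\pi\sqrt m/(2\sigma)\bigr)$ both sides may be raised to the power $m$, yielding $\cos^m(\sigma t/\sqrt m)\le f(t)$.

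What remains — and what is the whole substance of the statement — is to show $L_m\ge C\sqrt m$ with $C$ depending only on $a_1,\dots,a_{10}$. The numerator is the easy part: $a_4(g)-\sigma(g)^4=\E Y^4-(\E Y^2)^2=\mathrm{Var}(Y^2)$ is positive, which is precisely the positivity (the analogue of $\varphi^{(4)}(0)>0$) needed to invoke Theorem \ref{th1a} for $g$; and additivity of cumulants, $\kappa_4(g)=\kappa_4(f)/m=(a_4-3\sigma^4)/m$, gives the explicit value $a_4(g)-\sigma(g)^4=(a_4-3\sigma^4)/m+2\sigma^4/m^2$, of order $1/m$. The difficulty lies entirely in the denominator, i.e. in controlling the absolute moments $a_5(g)$ and $a_{10}(g)$ of the convolution root, since it is their dependence on $m$ that decides whether $L_m$ grows or decays.

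This is the step I expect to be the main obstacle. Under a pure contraction heuristic — pretending $Y\stackrel{d}{=}X/\sqrt m$, so $a_k(g)=a_k/m^{k/2}$ — the denominator is of order $m^{-5/2}$ and one gets exactly $L_m=C\sqrt m$ with $C=5(a_4-\sigma^4)\big/(\sigma^{10}+2\sigma^5a_5+a_{10})^{1/2}$, a constant in the moments of $f$ alone. The point is that a genuine $m$-th root is \emph{not} such a contraction: by the cumulant relations $\kappa_{2k}(g)=\kappa_{2k}(f)/m$ its higher absolute moments, e.g. $a_{10}(g)$, can be as large as order $1/m$ rather than $m^{-5}$, which would push the denominator up and $L_m$ down. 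I would therefore not substitute moments blindly but split $Y$ at a threshold of order $1/\sqrt m$: the truncated bulk is a genuine $1/\sqrt m$-scale variable and yields the favourable orders $a_5(g)\sim m^{-5/2}$, $a_{10}(g)\sim m^{-5}$, while the contribution of the rare large part — estimated through $\kappa_{2k}(g)=\kappa_{2k}(f)/m$, and for the odd moment through the Cauchy–Schwarz bound $a_5(g)\le a_{10}(g)^{1/2}$ — must be absorbed without destroying the $\sqrt m$ growth. Establishing that the resulting $L_m$ is bounded below by $C\sqrt m$ uniformly in $m$ is the crux; granting it, the reduction of the first paragraph completes the proof.
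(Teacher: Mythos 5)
Your first paragraph reproduces the paper's reduction exactly (pass to $g=f^{1/m}$, variance additivity, finiteness of the root's moments, apply Theorem \ref{th1a}, cut off at $\pi\sqrt m/(2\sigma)$ before raising to the $m$th power), and your cumulant computation $a_4(g)-\sigma(g)^4=(a_4-3\sigma^4)/m+2\sigma^4/m^2$ is correct. But the crux you isolate --- $L_m\ge C\sqrt m$ --- is left unproved, and your truncation plan cannot close it: Theorem \ref{th1a} is invoked for the \emph{actual} root $g$, and only the actual moments of $g$ enter its interval; truncating $Y$ produces a different random variable whose moments are irrelevant to that bound. Worse, your worry is realized, not just possible. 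Take $g(t)=1-p(1-\cos t)$, i.e.\ $Y=0$ with probability $1-p$ and $Y=\pm1$ with probability $p/2$, with $p=\lambda/m$; then $f=g^m$ is $m$-divisible with $a_1,\dots,a_{10}$ bounded uniformly in $m$. Here $a_k(g)=p$ for every $k$ and $\sigma(g)^2=p$, so the interval of Theorem \ref{th1a} is $5(p-p^2)/(p^5+2p^{7/2}+p)^{1/2}\asymp 5\sqrt p\asymp 1/\sqrt m$: it \emph{shrinks}, even though the inequality $\cos(\sqrt p\,t)\le g(t)$ in fact holds out to $|t|\le\pi/\sqrt p\asymp\sqrt m$ (by concavity of $\sin$ on $[0,\pi]$, $\sqrt p\sin(\sqrt p\,t)\ge p\sin t$ there, and for $\pi\le t\le\pi/\sqrt p$ use $1-\cos(\sqrt p\,t)\ge 2pt^2/\pi^2\ge 2p\ge p(1-\cos t)$). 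Note also that in this example your ``truncated bulk'' at scale $1/\sqrt m$ is the point mass at $0$, with zero variance, so the proposed split degenerates. The theorem's conclusion thus appears true in such examples, but no moment bookkeeping can extract $C\sqrt m$ from Theorem \ref{th1a} as stated; one would have to prove a sharper lower-bound lemma, e.g.\ rerun the $\varphi^{(5)}$ estimate of Theorem \ref{th1a} with the small-jump/large-jump decomposition built into the estimate itself.

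You should also know that the step you refused to take blindly is precisely where the paper's own proof fails. The paper's proof is your first paragraph plus the assertion (\ref{eq4}) that $a_{k,m}\sim a_k/m$ as $m\to\infty$, followed by ``apply Theorem \ref{th1a}''. But substituting $a_{k,m}\sim a_k/m$ and $\sigma(g)^2=\sigma^2/m$ into Theorem \ref{th1a} gives numerator $\sim 5a_4/m$ and denominator dominated by $a_{10,m}^{1/2}\sim(a_{10}/m)^{1/2}$, hence an interval $\sim 5a_4/(a_{10}^{1/2}\sqrt m)=O(1/\sqrt m)$ --- not $C\sqrt m$. Only the pure contraction scaling $a_{k,m}=a_k m^{-k/2}$, which you correctly reject for genuine $m$th roots, produces $C\sqrt m$, and that is exactly the scaling confirmed false by (\ref{eq4}) itself and by the example above. (There are further defects: for a fixed $m$-divisible $f$ the asymptotic relation (\ref{eq4}) cannot yield a constant valid at that $m$, and its constants are wrong anyway --- $a_{4,m}=(a_4-3\sigma^4)/m+3\sigma^4/m^2$, as your cumulant identity shows --- but it is the order in $m$, not the constants, that sinks the argument.) So your diagnosis of the obstacle is correct and sharper than the paper's treatment; neither your sketch nor the paper's proof establishes the stated range $C\sqrt m$.
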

\begin{proof} Denote $g(t)=f^{1/m}(t)$. It is clear that:
\begin{enumerate}
\item $g(t)$ is a symmetric characteristic function;
\item $m\sigma^2(g)=\sigma^2(f)=\sigma^2$ (because variance of sum of independent random variables equals to the sum of their variances);
\item distribution with characteristic function $g$ has finite absolute moments up to tenth order (see, for example, \cite{LO}). 
\end{enumerate}
It is not difficult to verify that 
\begin{equation}\label{eq4}
a_{k,m} \sim a_k/m \quad \text{as} \quad m\to \infty,
\end{equation}
where $a_{k,m}$ is $k$th absolute moment of $g$.
To finish the proof it is enough to apply Theorem \ref{th1a} and the relation (\ref{eq4}). 
\end{proof}

Consider now the case of infinitely divisible distribution.

\begin{thm}\label{th3}
Let $f(t)$ be a symmetric infinite divisible characteristic function with finite second moment $\sigma^2$. Then
\begin{equation}\label{eq5}
\exp\{-\sigma^2 t^2/2\} \leq f(t)
\end{equation}
for all $t \in \R^1$.
\end{thm}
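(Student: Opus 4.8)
The plan is to derive the Gaussian lower bound as the $m\to\infty$ limit of the $m$-divisible estimates already established. Since $f$ is infinitely divisible it is $m$-divisible for every positive integer $m$, and its standard deviation $\sigma$ does not depend on $m$. Hence, whenever the required moments are available, Theorem \ref{th2a} gives
\[
\cos^m\!\bigl(\sigma t/\sqrt m\bigr)\le f(t)
\]
on the interval $|t|\le\min\bigl(C\sqrt m,\pi\sqrt m/(2\sigma)\bigr)$, where $C>0$ is fixed by the moments of $f$ and, crucially, does not vary with $m$.

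First I would check the two limits that make the passage to $m\to\infty$ work. For fixed $t$, expanding $\cos(\sigma t/\sqrt m)=1-\sigma^2t^2/(2m)+o(1/m)$ and taking logarithms gives $m\log\cos(\sigma t/\sqrt m)\to-\sigma^2t^2/2$, so the left-hand side converges pointwise to $\exp\{-\sigma^2t^2/2\}$. At the same time $\min(C\sqrt m,\pi\sqrt m/(2\sigma))\to\infty$, so every fixed $t\in\R$ eventually lies in the admissible interval. Letting $m\to\infty$ in the inequality then yields $\exp\{-\sigma^2t^2/2\}\le f(t)$ for all $t$, which is (\ref{eq5}).

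The main obstacle is the mismatch in moment hypotheses: Theorem \ref{th2a} requires a finite tenth moment, whereas here only $\sigma^2=a_2$ is assumed finite, so the limiting argument cannot be applied to $f$ directly. I would close this gap by approximation. Using the L\'evy--Khintchine representation of the symmetric infinitely divisible $f$, namely $\log f(t)=-\gamma^2t^2/2+\int(\cos tx-1)\,d\nu(x)$ with symmetric L\'evy measure $\nu$ satisfying $\int x^2\,d\nu<\infty$ and $\sigma^2=\gamma^2+\int x^2\,d\nu$, I would truncate $\nu$ to $|x|\le n$. The truncated exponents are characteristic functions $f_n$ that are still symmetric and infinitely divisible but now have finite moments of all orders, so the limiting argument applies and gives $f_n(t)\ge\exp\{-\sigma_n^2t^2/2\}$ with $\sigma_n^2=\gamma^2+\int_{|x|\le n}x^2\,d\nu$. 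Since $f_n(t)\to f(t)$ and $\sigma_n^2\to\sigma^2$ as $n\to\infty$, the desired bound for $f$ follows. Alternatively---and more cleanly---the same representation proves (\ref{eq5}) in one stroke: the elementary inequality $\cos u\ge 1-u^2/2$ shows that the integrand $\cos(tx)-1+t^2x^2/2$ is pointwise nonnegative, whence $\log f(t)+\sigma^2t^2/2=\int(\cos tx-1+t^2x^2/2)\,d\nu\ge0$.
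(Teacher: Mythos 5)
Your proposal is correct and follows the paper's own route twice over: the passage $m\to\infty$ in the $m$-divisible bound of Theorem \ref{th2a}, with approximation by infinitely divisible laws having finite tenth moment, is exactly the paper's first proof (your truncation of the L\'evy measure to $|x|\le n$ usefully fills in the detail the paper leaves as ``one can approximate''), and your closing one-stroke argument via $\cos u \ge 1-u^2/2$ applied to $\log f(t)+\sigma^2 t^2/2=\int\bigl(\cos(tx)-1+t^2x^2/2\bigr)\,d\nu \ge 0$ is precisely the paper's ``Another proof'' via the Kolmogorov representation. The two limits you verify (pointwise convergence $\cos^m(\sigma t/\sqrt m)\to\exp\{-\sigma^2t^2/2\}$ and the admissible interval exhausting $\R^1$), together with the monotonicity $\sigma_n^2\le\sigma^2$ that makes the $n\to\infty$ passage legitimate, are exactly the right checks.
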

\begin{proof}
If $f(t)$ has finite tenth moment it is sufficient pass to limit in (\ref{eq3}) as $m\to \infty$. In general case one can approximate $f$ by infinitely divisible characteristic functions with finite tenth moment.
\end{proof}
\begin{proof}[Another proof] Kolmogorov representation formula (see, for example \cite{L}) for $f(t)$ allows us to rewrite (\ref{eq5}) in the form
\[\exp \{-\sigma^2 t^2/2\}\leq \exp\{-\int_{-\infty}^{\infty}\bigl(1-\cos (t x)\bigr)/x^2 d K(x)\} \]
or, equivalently,
\begin{equation}\label{eq6}
\int_{-\infty}^{\infty}\bigl(1-\cos (t x)\bigr)/x^2 d K (x) \leq \sigma^2 t^2/2.
\end{equation}
However,
\[2 \bigl(1-\cos (t x)\bigr)/x^2 \leq 4 \sin^2(\frac{tx}{2})/x^2 \leq t^2 .\]
This leads to (\ref{eq6}) with 
\[\sigma^2 = \int_{-\infty}^{\infty}dK(x)=-f^{\prime \prime}(0).\]
\end{proof}
From the last proof it follows that {\it if the equality in (\ref{eq5}) attends in a point $t_o \neq 0$ then it holds for all $t \in \R^1$}.

Theorem \ref{th3} shows an extreme property of Gaussian distribution among the class of infinite divisible distributions with finite second moment. Another extreme property without any moment conditions was given in \cite{KKR}. Let us give this result here.

\begin{thm}\label{th4}
Let $f(t)$ be a symmetric infinite divisible characteristic function. Then
\begin{equation}\label{eq7}
f(t) \geq f^4(t/2)
\end{equation}
for all $t \in R^1$.  If the equality in (\ref{eq7}) attends in a point $t_o \neq 0$ then it holds for all $t \in \R^1$
\end{thm}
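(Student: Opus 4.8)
The plan is to pass to logarithms and then use the representation of a symmetric infinitely divisible characteristic function. First I would record the standard fact that an infinitely divisible characteristic function never vanishes; since $f$ is symmetric it is real, and being continuous with $f(0)=1$ and nowhere zero, it must satisfy $f(t)>0$ for every $t$. Hence $\log f$ is well defined and real, and the inequality $f(t)\ge f^4(t/2)$ is equivalent to
\[ \log f(t) - 4\log f(t/2) \ge 0. \]

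Next I would invoke the Levy--Khintchine representation, which holds \emph{without} any moment assumption (so Kolmogorov's formula used in the second proof of Theorem \ref{th3} is not available here):
\[ \log f(t) = -\frac{\gamma^2 t^2}{2} - \int_{\R\setminus\{0\}} \bigl(1-\cos(tx)\bigr)\,d\nu(x), \]
where $\gamma\ge 0$ and $\nu$ is a symmetric measure with $\int \min(1,x^2)\,d\nu(x)<\infty$; symmetry of $f$ removes the drift and truncation terms. A one-line computation shows the Gaussian term contributes exactly zero to $\log f(t)-4\log f(t/2)$ (since $-\tfrac{\gamma^2t^2}{2}+4\cdot\tfrac{\gamma^2 t^2}{8}=0$), so the entire question reduces to the sign of
\[ \int_{\R\setminus\{0\}} \Bigl[\,4\bigl(1-\cos(tx/2)\bigr)-\bigl(1-\cos(tx)\bigr)\Bigr]\,d\nu(x). \]

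The key step, and really the only nontrivial point, is the elementary identity
\[ 4\bigl(1-\cos(\theta/2)\bigr)-\bigl(1-\cos\theta\bigr)=2\bigl(1-\cos(\theta/2)\bigr)^2, \]
which follows from $\cos\theta=2\cos^2(\theta/2)-1$. With $\theta=tx$ the bracketed integrand is a perfect square, hence nonnegative, and since $\nu\ge 0$ the integral is $\ge 0$. This establishes (\ref{eq7}). It is worth noting that this argument needs nothing but positivity of $\nu$, which is why no moment condition is required, in contrast to Theorem \ref{th3}.

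The part I expect to be the main obstacle is the equality assertion. Tracing the argument, equality in (\ref{eq7}) at a point $t_o\neq 0$ forces $\bigl(1-\cos(t_o x/2)\bigr)^2=0$ for $\nu$-almost every $x$, i.e. $\nu$ is carried by the lattice $\{4\pi k/t_o : k\in\mathbb{Z}\}$. One must then argue that, together with the identically vanishing Gaussian contribution, this pins the representation down so that equality propagates to every $t$; this step is delicate, since a lattice-supported $\nu$ appears at first glance to yield equality only on a discrete set, and I would expect the careful resolution to require the full analysis of \cite{KKR}, to which I would defer for this final clause.
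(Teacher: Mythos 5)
Your proof of the inequality (\ref{eq7}) is correct and is essentially the paper's own argument. The paper also works from the L\'{e}vy--Khinchin representation, but in the classical form with kernel $\bigl(1-\cos(tx)\bigr)\frac{1+x^2}{x^2}\,d\Theta(x)$, in which the Gaussian component is the atom of $\Theta$ at the origin; this is why the paper never treats the Gaussian part separately, whereas you cancel it by hand --- a cosmetic difference only, since the two kernels agree at $x=0$ (both give $t^2/2$), so the Gaussian contribution to the difference vanishes either way. The paper then proves the pointwise bound via $1-\cos(tx)=8\sin^2(tx/4)\cos^2(tx/4)\leq 8\sin^2(tx/4)=4\bigl(1-\cos(tx/2)\bigr)$; your identity $4\bigl(1-\cos(\theta/2)\bigr)-\bigl(1-\cos\theta\bigr)=2\bigl(1-\cos(\theta/2)\bigr)^2=8\sin^4(\theta/4)$ is the same computation written as a perfect square. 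For the inequality itself there is nothing to add.

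Your caution about the equality clause is justified, and in fact more than justified. First, the paper's own proof stops at the trigonometric inequality and says nothing about equality, so you are not missing a step the paper supplies. Second, the deferral you propose cannot be discharged as the clause is literally stated: as you correctly computed, equality at $t_o$ only forces the L\'{e}vy measure onto the lattice $\{4\pi k/t_o:\ k\in\mathbb{Z}\}$, and lattice measures genuinely break the conclusion. Concretely, take the symmetric compound Poisson function $f(t)=\exp\{\lambda(\cos t-1)\}$ with $\lambda>0$. Then $f(4\pi)=1=f^4(2\pi)$, so equality holds at $t_o=4\pi$, while $f(2\pi)=1>e^{-8\lambda}=f^4(\pi)$, so the inequality is strict at $t=2\pi$. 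Thus equality at a single nonzero point does not propagate to all of $\R^1$; the rigidity phenomenon is specific to situations like Theorem \ref{th3}, where the difference kernel $t^2/2-\bigl(1-\cos(tx)\bigr)/x^2$ vanishes only at $x=0$ and hence pins the representing measure to the Gaussian case, whereas here the difference kernel $8\sin^4(tx/4)\frac{1+x^2}{x^2}$ vanishes on a whole lattice. So your instinct that this step is delicate was right; the honest conclusion is that the final clause of the theorem requires either additional hypotheses or a reformulation, not merely a citation.
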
  
\begin{proof}
From L\'{e}vy-Khinchin representation we have 
\[f(t) = \exp\{-\int_{-\infty}^{\infty}\bigl(1-\cos(t x)\bigr)\frac{1+x^2}{x^2}d\Theta (x)\}, \]
\[ f^4(t/2) = \exp\{-4\int_{-\infty}^{\infty}\bigl(1-\cos(t x/2)\bigr)\frac{1+x^2}{x^2}d\Theta (x)\}.\]
However,
\[ 1-\cos(t x)= 2 \sin^2(t x/2) =8 \sin^2(t x/4)\cos^2(t x/4) \leq\]
\[\leq  8 \sin^2 (t x/4) =4 \bigl(1-\cos(t x/2)\bigr) \]
\end{proof}

Let us note that Theorem \ref{th3} may be obtained from Theorem \ref{th4}. Really, assuming the existence of finite second moment we have
\[ f(t)\geq f^{4}(t/2) \geq f^{4^2}(t/2^2) \geq \ldots \geq f^{4^k}(t/2^k) \to \exp\{-\sigma^2 t^2/2\}\]
as $k\to \infty$. This proves the inequality (\ref{eq5}).

\section{Inequalities for characteristic functions. Estimates from above}\label{sec2} 

Our aim here is to proof the following result. 

\begin{thm}\label{th2.1}
Let $g(t)$ be a characteristic function of an even non-degenerate function having its support in interval $[-A,A]$, $A>0$. Then the inequality
\begin{equation}\label{eq2.1}
g(t) \leq \cos(a_{1/\gamma}^{\gamma}\cdot t) 
\end{equation}
holds for all $t \in (-\pi/(2A^{\gamma}),\pi/(2A^{\gamma}))$. Here $a_{1/\gamma}$ is absolute moment of the order $1/\gamma$ of the distribution with characteristic function $g(t)$, and $\gamma >1$.
\end{thm}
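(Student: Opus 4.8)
The plan is to mirror the proof of Theorem~\ref{th1}, but to replace its convexity argument (which produced a lower bound) by a concavity argument producing the desired upper bound. First I would restrict to $t>0$ and $x\ge 0$, using that both $g$ and $\cos$ are even, and pass to the folded distribution $\tilde G$ on $[0,A]$ exactly as in \eqref{eq2}. The substitution suited to the order-$1/\gamma$ moment is $y=x^{1/\gamma}$, i.e. one works with the random variable $Y=|X|^{1/\gamma}$, whose distribution function $H$ is supported on $[0,A^{1/\gamma}]$. With this change of variable
\begin{equation*}
g(t)=\int_{0}^{A^{1/\gamma}}\cos\bigl(t\,y^{\gamma}\bigr)\,dH(y),\qquad \int_{0}^{A^{1/\gamma}} y\,dH(y)=a_{1/\gamma},
\end{equation*}
so that $a_{1/\gamma}$ appears precisely as the mean of $Y$.

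The crucial step is to establish that $y\mapsto\cos(t\,y^{\gamma})$ is concave on the support. Differentiating twice and setting $z=t\,y^{\gamma}=tx$, the second derivative factors as $-t\gamma y^{\gamma-2}\bigl[\gamma z\cos z+(\gamma-1)\sin z\bigr]$, so concavity is governed by the trigonometric inequality
\begin{equation*}
\gamma\,z\cos z+(\gamma-1)\sin z\ge 0 .
\end{equation*}
For $\gamma>1$ all three summands are nonnegative whenever $z\in[0,\pi/2]$, and the inequality in fact persists up to the first positive root $z_{\gamma}>\pi/2$ of the left-hand side. This is the exact analogue of the condition $\sin z-z\cos z\ge 0$ (i.e. $z<z_o\approx 4.49$) used for the convexity in Theorem~\ref{th1}; that case corresponds to the excluded exponent $\gamma=1/2$, where the sign of $\gamma-1$ flips and one obtains convexity rather than concavity.

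Granting concavity on the range of $Y$, Jensen's inequality in its concave form gives
\begin{equation*}
g(t)=\int_{0}^{A^{1/\gamma}}\cos\bigl(t y^{\gamma}\bigr)\,dH(y)\le\cos\Bigl(t\Bigl(\int_{0}^{A^{1/\gamma}} y\,dH(y)\Bigr)^{\gamma}\Bigr)=\cos\bigl(a_{1/\gamma}^{\gamma}\,t\bigr),
\end{equation*}
which is \eqref{eq2.1}; the case $t<0$ then follows by symmetry of both sides.

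The step I expect to be the main obstacle is the determination of the admissible $t$-interval. Since concavity must hold on all of $[0,A^{1/\gamma}]$ for the Jensen step to be legitimate, one must guarantee that $z=tx$ remains below the threshold $z_{\gamma}$ for \emph{every} $x$ in the support, i.e. up to $x=A$; otherwise $\cos(ty^{\gamma})$ leaves its concavity region and the inequality direction is no longer assured. I would first secure the transparent subregion $tx\le\pi/2$ (where every summand of the trigonometric inequality is manifestly nonnegative) and then verify carefully that the interval stated in \eqref{eq2.1} keeps $z=tx$ inside the concavity region throughout the support, checking consistency of the right-hand side via $a_{1/\gamma}\le A^{1/\gamma}$, so that $a_{1/\gamma}^{\gamma}\le A$.
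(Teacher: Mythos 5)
Your proof is essentially the paper's own: the identical folding of $G$, the substitution $H(y)=\tilde{G}(y^{\gamma})$ so that $Y=|X|^{1/\gamma}$ has mean $a_{1/\gamma}$, concavity of $y\mapsto\cos(t y^{\gamma})$, and Jensen's inequality in its concave form. Your factorization of the second derivative as $-t\gamma y^{\gamma-2}\bigl[\gamma z\cos z+(\gamma-1)\sin z\bigr]$ with $z=t y^{\gamma}=tx$ is correct, and it is in fact more careful than the paper's proof, which misstates the concavity condition as $0\le y|t|\le\pi/2$ (the correct variable is $y^{\gamma}|t|=|t|x$) and carries two typos (the upper limit $A$ where $A^{1/\gamma}$ is meant, and ``$G(xk)$'' for $G(x)$). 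Your remark that Theorem~\ref{th1} is the same computation with inner exponent $1/2$, where the sign of the bracket flips and convexity results, is also accurate.

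The obstacle you flag at the end is genuine, but it is a defect of the theorem as printed, not of your argument, and you should not try to overcome it. Concavity on the whole support requires $z=|t|x\le\pi/2$ (or at most $z_{\gamma}$) for every $x\le A$, i.e.\ $|t|\le\pi/(2A)$, and this is the scale-correct range: replacing $X$ by $cX$ multiplies both $A$ and $a_{1/\gamma}^{\gamma}$ by $c$, so the admissible $t$-interval must scale like $1/A$, whereas the printed bound $\pi/(2A^{\gamma})$ rescales by $c^{-\gamma}$. If the statement held verbatim for every $A>0$, letting $c\to0$ (so $\gamma>1$ makes the rescaled interval blow up) would force $g(t)\le\cos(a_{1/\gamma}^{\gamma}t)$ for all $t\in\R^1$, which fails for any symmetric non-lattice law: at points where $\cos(a_{1/\gamma}^{\gamma}t)=-1$ one would need $g(t)=-1$, impossible when $|g(t)|<1$ for $t\ne0$. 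So for $A\ge1$ your postponed verification succeeds trivially because $\pi/(2A^{\gamma})\le\pi/(2A)$, but for $A<1$ the printed interval is strictly larger than anything the concavity argument (or the truth) can deliver; the exponent $\gamma$ on $A$ is evidently a typo, and the correct conclusion of your proof is the inequality on $(-\pi/(2A),\pi/(2A))$.
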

\begin{proof}
Denote by $G(x)$ probability distribution function corresponding to characteristic function $g(t)$. Set $\tilde{G}(x)=2\bigl(G(xk)-1/2\bigr)$,
$H(y)=\tilde{G}(y^{\gamma})$. We have
\[ g(t) = \int_{-A}^{A}\cos (t x)dG(x) = \int_{0}^{A} \cos(t x)d\tilde{G}(x) = \]
\[ =\int_{0}^{A}\cos (t y^{\gamma})dH(y) \leq \cos ( a_{1/\gamma}^{\gamma}\cdot t) \]
for all $|t| \leq \pi/(2 A^{\gamma})$. Here we used the fact that the function $\cos (t y^{\gamma})$ is concave in $y$ if $0 \leq y |t|\leq \pi/2$ and applied Jensen inequality.
\end{proof}

\section{Inequalities for some moments of infinitely divisible distributions}\label{sec3}
\setcounter{equation}{0} 

Let us give some inequalities comparing the moments of infinitely divisible distributions with corresponding characteristics of Gaussian distribution.

\begin{thm}\label{th3.1}
Let $X$ be a random variable having symmetric infinitely divisible distribution with finite second moment. Suppose that $0<r<2$. Then
\begin{equation}\label{eq2.1}
\E |X|^r \leq \E |Y|^r=\frac{2^{r/2}\sigma^r \Gamma((1+r)/2)}{\sqrt{\pi}},
\end{equation}
where random variable $Y$ has symmetric Gaussian distribution with the same second moment $\sigma^2$ as $X$. The equality in (\ref{eq2.1}) attends if and only if $X$ has Gaussian distribution.  
\end{thm}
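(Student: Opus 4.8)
The plan is to convert the comparison of characteristic functions provided by Theorem~\ref{th3} into a comparison of fractional absolute moments, using the classical integral representation of $|x|^r$ for $0<r<2$. Recall that there is a positive constant $c_r$, depending only on $r$, such that for every real $x$
\[ |x|^r = c_r \int_0^{\infty} \frac{1-\cos(tx)}{t^{1+r}}\, dt, \]
the integral converging precisely because $0<r<2$ (the singularity at $t=0$ is controlled by $1-\cos(tx)=O(t^2)$, while integrability at infinity needs $r>0$). Since the distribution of $X$ is symmetric, its characteristic function is real and $\E\cos(tX)=f(t)$. Applying Fubini's theorem to the nonnegative integrand, I would obtain
\[ \E|X|^r = c_r \int_0^{\infty} \frac{1-f(t)}{t^{1+r}}\, dt. \]

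First I would verify that this representation applies equally to $Y$, whose characteristic function is $f_Y(t)=\exp\{-\sigma^2 t^2/2\}$, giving $\E|Y|^r = c_r \int_0^{\infty} (1-f_Y(t))\, t^{-1-r}\, dt$. The finiteness of the second moment guarantees $1-f(t)=O(t^2)$ near the origin, so both integrals converge and the interchange is justified. Next, Theorem~\ref{th3} yields $f(t)\ge \exp\{-\sigma^2 t^2/2\}=f_Y(t)$ for all $t$, hence $1-f(t)\le 1-f_Y(t)$ pointwise. Since $c_r>0$ and $t^{-1-r}>0$, integrating this pointwise inequality immediately gives
\[ \E|X|^r = c_r \int_0^{\infty} \frac{1-f(t)}{t^{1+r}}\, dt \le c_r \int_0^{\infty} \frac{1-f_Y(t)}{t^{1+r}}\, dt = \E|Y|^r. \]
The explicit value $\E|Y|^r = 2^{r/2}\sigma^r \Gamma((1+r)/2)/\sqrt{\pi}$ is then the standard absolute moment of a centered Gaussian with variance $\sigma^2$, obtained by a direct computation.

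For the equality case I would argue as follows. The difference of the two integrals equals $c_r \int_0^{\infty} (f(t)-f_Y(t))\, t^{-1-r}\, dt$, whose integrand is nonnegative and continuous in $t$. Equality $\E|X|^r=\E|Y|^r$ therefore forces $f(t)=f_Y(t)$ for all $t>0$, i.e. $X$ is Gaussian; alternatively one may invoke the sharpening of Theorem~\ref{th3} noted in the text, by which equality in $f\ge f_Y$ at a single point $t_o\ne 0$ already propagates to all of $\R^1$. The converse is immediate.

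The step I expect to require the most care is the rigorous justification of the integral representation together with the Fubini interchange: one must confirm convergence of the representing integral at both endpoints and check that the second-moment hypothesis delivers the $O(t^2)$ behaviour of $1-f(t)$ near $0$ needed to keep $\int_0^{\infty}(1-f(t))\, t^{-1-r}\, dt$ finite for all $r<2$. Once this analytic bookkeeping is in place, the comparison itself is a one-line consequence of Theorem~\ref{th3}.
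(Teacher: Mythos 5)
Your proposal is correct and follows essentially the same route as the paper: the fractional-moment representation $\E|Z|^r = C_r\int_0^\infty \bigl(1-\mathrm{Re}\,h(t)\bigr)t^{-r-1}\,dt$ combined with the pointwise bound $f(t)\ge \exp\{-\sigma^2t^2/2\}$ from Theorem~\ref{th3}. In fact you supply details the paper leaves implicit --- the Fubini justification and the equality case via continuity of the nonnegative integrand (or the propagation remark after Theorem~\ref{th3}) --- so nothing is missing.
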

\begin{proof}
Recall that if $Z$ is a random variable with characteristic function $h(t)$ then 
\[\E|Z|^r =C_r \int_{0}^{\infty}\frac{1-Re(h(t))}{t^{r+1}}dt,\]
where $C_r$ depends on $r$ only ($0<r<2$). From (\ref{eq5}) it follows that
\[ 1-\exp\{-\sigma^2 t^2/2\} \geq 1-f(t)  \]
and
\[C_r\int_{0}^{\infty}\frac{1-\exp(-\sigma^2 t^2/2)}{t^{r+1}}dt \geq  C_r\int_{0}^{\infty}\frac{1-f(t)}{t^{r+1}}dt. \]
\end{proof}

\section*{Acknowledgement}

The study was partially supported by grant GA\v{C}R 19-04412S (Lev Klebanov) and by grant  
SGS18/065/OHK4/1T/13 Czech Technical University in Prague (Irina Volchenkova).

\end{document}